\theoremstyle{plain}
\newtheorem{thm}{Theorem}[section]
\newtheorem{pro}[thm]{Proposition}
\theoremstyle{definition}
\newtheorem{rem}[thm]{Remark}
\numberwithin{equation}{section}
\newcommand{\R}{\mathbb{R}}
\begin{document}

\title[Fixed point theorems and systems of integral equations]{A positive fixed point theorem with applications to systems of Hammerstein integral equations}  

\date{}

\author[A. Cabada]{Alberto Cabada}
\address{Alberto Cabada, Departamento de An\'alise Ma\-te\-m\'a\-ti\-ca, Facultade de Matem\'aticas, 
Universidade de Santiago de Com\-pos\-te\-la, 15782 Santiago de Compostela, Spain}%
\email{alberto.cabada@usc.es}%

\author[J. {\'A}. Cid]{Jos\'e \'Angel Cid}
\address{Jos{\'e} {\'A}ngel Cid, Departamento de Matem\'aticas, Universidade de Vigo, 32004, Pabell\'on 3, Campus de Ourense, Spain}%
\email{angelcid@uvigo.es}%

\author[G. Infante]{Gennaro Infante}
\address{Gennaro Infante, Dipartimento di Matematica e Informatica, Universit\`{a} della
Calabria, 87036 Arcavacata di Rende, Cosenza, Italy}%
\email{gennaro.infante@unical.it}%

\begin{abstract} 
We present new criteria on the existence of fixed points that combine some monotonicity assumptions with the classical fixed point index theory. As an illustrative application, we use our theoretical results to prove the existence of positive solutions for systems of nonlinear Hammerstein integral equations. An example is also presented to show the applicability of our results.
\end{abstract}

\subjclass[2010]{Primary 47H10, secondary 34B10, 34B18, 45G15, 47H30}

\keywords{Cone, boundary value problem, fixed point index, positive solution, nonlocal boundary conditions, system.}

\maketitle

\section{Introduction}
In this manuscript we pursue the line of research developed in the recent papers~\cite{cabcid,CCI,jc-df-fm,df-gi-jp-prse,persson} in order to deal with fixed point theorems on cones that mix monotonicity assumptions and conditions in one boundary, instead of imposing conditions on two boundaries as in the celebrated cone compression/expansion fixed point theorem of Krasnosel'ski\u\i{}. In order to do this we employ the well-known monotone iterative method, combined with the classical fixed point index. In Section 2 we prove two results concerning non-decreasing and non-increasing operators in a shell, in presence of an upper or of a lower solution; in Remark~\ref{compthem} we present a comparison with previous results in this direction.

In~\cite{jc-df-fm} Cid and co-authors, in order to show the existence of positive solutions of the fourth-order boundary value problem (BVP)
\begin{gather}
\begin{aligned}\label{bvp-intro}
 u^{(4)}= \lambda g(t) f(u),\ t\in (0,1),\\
u(0)=u(1)=0=u''(0)=u''(1),
\end{aligned}
\end{gather}
where $\lambda>0$, studied the associated Hammerstein integral equation 
\begin{equation}\label{intHE0}
u(t)=\lambda \int_0^1  k(t,s) g(s)f(u(s))\,ds,
\end{equation}
where $k$ is precisely the Green's function associated to the BVP~\eqref{bvp-intro}. Having defined the constant 
$$
\gamma^*=\displaystyle\max_{t\in [0,1]} \int_0^1
k(t,s) g(s)\, ds,
$$
the main result in~\cite{jc-df-fm}, regarding the BVP~\eqref{bvp-intro}, is the following.

\begin{thm}\label{cfm}
\label{tappl} Assume that $\displaystyle\lim_{s\to \infty}
\frac{f(s)}{s}=+\infty$ and there exists $B\in [0,+\infty]$ such
that $f$ is non-decreasing on $[0,B)$. If \[0< \lambda <
\sup_{s\in(0,B)}\frac{s}{\gamma^* f(s)},\] (with the obvious meaning
when $f(s)=0$), then the BVP~\eqref{bvp-intro} has at least a positive
solution.
\end{thm}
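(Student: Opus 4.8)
The plan is to rewrite the BVP~\eqref{bvp-intro} as the fixed point equation $u=Tu$ for the Hammerstein operator associated with~\eqref{intHE0}, namely
$$Tu(t)=\lambda\int_0^1 k(t,s)g(s)f(u(s))\,ds,\qquad t\in[0,1],$$
acting on a suitable cone of $C[0,1]$, and then to invoke the non-decreasing version of the abstract fixed point theorem of Section~2. First I would record the standard properties of the Green's function $k$ of~\eqref{bvp-intro}: $k$ is continuous and non-negative on $[0,1]^2$ and there are a subinterval $[a,b]\subset(0,1)$ and a constant $c\in(0,1]$ with $k(t,s)\ge c\,k(\tau,s)$ for all $t\in[a,b]$ and $\tau,s\in[0,1]$. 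With the cone $P=\{u\in C[0,1]:u\ge 0,\ \min_{t\in[a,b]}u(t)\ge c\|u\|\}$, and under the standing positivity assumptions on $f$ and $g$, one checks in the usual way that $T$ is well defined, completely continuous, and maps $P$ into $P$.

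Next I would produce the upper solution that triggers the Section~2 result. Since $0<\lambda<\sup_{s\in(0,B)}\frac{s}{\gamma^* f(s)}$, there is $\rho\in(0,B)$ with $\lambda\gamma^* f(\rho)<\rho$. The constant function $\beta\equiv\rho$ belongs to $P$, and, because $f$ is non-decreasing on $[0,B)$ and $\rho<B$, for every $u\in P$ with $u\le\beta$ one has
$$Tu(t)\le\lambda f(\rho)\int_0^1 k(t,s)g(s)\,ds\le\lambda\gamma^* f(\rho)<\rho,\qquad t\in[0,1],$$
so $T\beta\le\beta$ and $T$ is non-decreasing on the order interval $\{u\in P:u\le\beta\}$. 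This is precisely the ``one boundary plus monotonicity'' hypothesis: the requirement falls on the single value $f(\rho)$ rather than on $\sup_{[0,\rho]}f$, and this is exactly where non-decreasingness is used. The remaining, large-scale hypothesis is supplied by $\lim_{s\to\infty}f(s)/s=+\infty$: setting $m=\min_{t\in[a,b]}\int_a^b k(t,s)g(s)\,ds>0$, choose $N$ with $\lambda Ncm>1$ and then $R>\rho$ so large that $f(s)\ge Ns$ for $s\ge cR$; for $u\in P$ with $\|u\|=R$ one has $u(s)\ge cR$ on $[a,b]$, hence for $t\in[a,b]$
$$Tu(t)\ge\lambda NcR\int_a^b k(t,s)g(s)\,ds\ge\lambda NcmR>R=\|u\|,$$
which is the expansion (index-zero) condition on the large ball $B_R\cap P$ required by the abstract theorem.

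Finally, applying the non-decreasing result of Section~2 with the upper solution $\beta$ and the estimate at level $R$ yields a fixed point $u\in P$ of $T$ with $u\neq 0$; since $u\in P$ we get $u(t)\ge c\|u\|>0$ on $[a,b]$, and positivity of $k$ on $(0,1)^2$ upgrades this to $u>0$ on $(0,1)$, so $u$ solves~\eqref{bvp-intro} and is positive. The point requiring care is the bookkeeping that matches the present hypotheses to the exact statement of the Section~2 theorem: one must check that $\beta\equiv\rho$ is admissible, so that $\rho$ can indeed be taken in $(0,B)$ — this is where the strict inequality in the bound on $\lambda$ and the convention for $f(s)=0$ enter — and one must note that no monotonicity of $f$ beyond $B$ is needed, because the region of large norm is controlled purely by the superlinear growth at infinity. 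This clean separation of the two regimes is exactly the gain over the classical Krasnosel'ski\u\i{} compression argument commented on in Remark~\ref{compthem}.
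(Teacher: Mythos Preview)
Theorem~\ref{cfm} is quoted from \cite{jc-df-fm} and is \emph{not} proved in the present paper, so there is no ``paper's own proof'' to compare against. What the paper does prove is the system analogue, Theorem~\ref{thHam}, and your sketch is essentially the single-equation version of that argument: constant upper solution $\beta\equiv\rho$, monotonicity of $T$ below $\beta$, and an index-zero set coming from superlinearity at infinity. In that sense the strategy matches the paper's methodology.

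There is, however, a genuine gap in your verification of $T\beta\preceq\beta$. You work in the cone $P=\{u\ge 0:\min_{[a,b]}u\ge c\|u\|\}$, so $T\beta\preceq\beta$ means $\beta-T\beta\in P$, not merely $T\beta(t)<\rho$ pointwise. For the Green's function of \eqref{bvp-intro} one has $k(0,s)=k(1,s)=0$, hence $T\beta(0)=T\beta(1)=0$ and $\|\beta-T\beta\|_\infty=\rho$. The cone condition $\min_{[a,b]}(\rho-T\beta)\ge c\,\rho$ then forces $\lambda\gamma^* f(\rho)\le (1-c)\rho$, which is strictly stronger than the inequality $\lambda\gamma^* f(\rho)<\rho$ you obtained from the hypothesis. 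Carrying this through, your argument yields only
\[
0<\lambda<\sup_{s\in(0,B)}\frac{(1-c)\,s}{\gamma^* f(s)},
\]
i.e.\ a result weaker than the stated Theorem~\ref{cfm} by the factor $(1-c)$. This is not an accident: it is exactly the bound \eqref{eqlamb} that appears in the paper's own Theorem~\ref{thHam}, whose proof (Claim~1) makes the need for the extra factor explicit. To recover the sharper constant of Theorem~\ref{cfm} one cannot simply invoke Theorem~\ref{thindex} in the cone $P$; the original proof in \cite{jc-df-fm} uses a different abstract scheme in which the order comparison $T\beta\le\beta$ is only pointwise, and the loss of the $(1-c)$ factor is the price the present paper pays for its more general framework.
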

Note that the above theorem is valid for a \emph{specific} Green's function. On the other hand the existence of nonnegative solutions for systems of Hammerstein integral equations has been widely studied, see for example~\cite{ag-or-pw, chzh1, chzh2, dunn-wang1, dunn-wang2, df-gi-do, Goodrich1, Goodrich2, 
 jh-rl1, jh-rl2, gipp-nonlin, karejde, lan, lan-lin-lms, lan-lin-na, ya2, yang-zhang} and references therein. 
In Section~\ref{sectionapp} we give an extension of Theorem~\ref{cfm} to the context of systems of Hammerstein integral equations of the type
\begin{gather}
\begin{aligned}\label{ham-sys-intro}
u_1(t)=\lambda_1 \int_{a}^{b}k_1(t,s)g_1(s)f_1(u_1(s),u_2(s))\,ds, \\
u_2(t)= \lambda_2 \int_{a}^{b}k_2(t,s)g_2(s)f_2(u_1(s),u_2(s))\,ds,
\end{aligned}
\end{gather}
providing, under suitable assumptions on the kernels and the nonlinearities, the existence of a positive solution.

In order to show the applicability of our results, we discuss the following system of second-order ODEs, subject to local and nonlocal boundary conditions, that generates two different kernels,
\begin{gather}
\begin{aligned}\label{1syst-intro}
u_1''(t) + \lambda_1 f_1(u_1(t),u_2(t)) &= 0, \ t\in  (0,1), \\
u_2''(t) + \lambda_2 f_2(u_1(t),u_2(t)) &= 0, \ t\in  (0,1),\\
u_1'(0)=0,\ u_1(1)+u_1'(1)&=0,\\
u_2'(0)=0,\ u_2(1)-\xi u_2(\eta)&=0,\ \eta \in (0,1), \ 0 < \xi<1,
\end{aligned}
\end{gather}
computing all the constants that occur in our theory.

\section{Two fixed point theorems in cones}
A subset $K$ of a
real Banach space $X$ is a {\it cone} if it is closed, $K+K\subset K$,
  $\lambda K\subset K$ for all $\lambda\ge 0$ and  $K\cap(-K)=\{\theta\}$. A cone $K$ defines the partial ordering in
$X$ given by 
$$ x\preceq y \quad \mbox{if and only if $y-x\in K$}.$$
 We reserve the symbol
``$\le$" for the usual order on the real line. For $x,\ y \in X$,
with $x\preceq y$, we define the ordered interval \[[x,y]=\{z\in X : x\preceq z\preceq y\}.\]

The cone $K$ is {\it normal} if there exists $d>0$ such that for all $x, y\in X$ with $0\preceq x\preceq y$ then
$\|x\|\le d \|y\| $ .

We denote the closed ball of center $x_0\in X$ and radius $r>0$ as
\[B[x_0,r]=\{x\in X : \|x-x_0\|\le r \},\] and the intersection of the cone with the open ball centered at the origin and radius $r>0$ as
$$K_r=K \cap \{x\in X : \|x\|<r\}.$$

We recall a well known result of the fixed point theory, known as the monotone iterative method (see, for example,~\cite[Theorem 7.A]{zeidler} or~\cite{amann}). 

\begin{thm} \label{mim} Let $N$ be a real Banach space with normal order cone
$K$. Suppose that there exist $\alpha\le \beta$  such that $T\colon [\alpha,\beta]\subset N\to  N$ is a completely continuous monotone non-decreasing operator with $\alpha \le T \alpha $  and $T\beta \le \beta$. Then $T$ has a fixed point and the iterative sequence  $\alpha_{n+1}= T \alpha_n$, with $\alpha_0=\alpha$, converges to the greatest fixed point of $T$ in $[\alpha,\beta]$, and the sequence $\beta_{n+1} = T \beta_n$, with $\beta_0=\beta$, converges to the smallest fixed point of $T$ in $[\alpha,\beta]$. 
\end{thm}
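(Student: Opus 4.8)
The plan is to prove the monotone iterative method (Theorem~\ref{mim}) by the classical successive approximation argument, exploiting the order structure and compactness. First I would show that the sequence $(\alpha_n)$ defined by $\alpha_0 = \alpha$ and $\alpha_{n+1} = T\alpha_n$ is well defined and stays in $[\alpha,\beta]$: since $\alpha \preceq T\alpha = \alpha_1$ by hypothesis, an induction using the monotone non-decreasing property of $T$ gives $\alpha_n \preceq \alpha_{n+1}$; and since $\alpha_n \preceq \beta$ implies $\alpha_{n+1} = T\alpha_n \preceq T\beta \preceq \beta$, another induction keeps the whole sequence in the ordered interval $[\alpha,\beta]$. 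Thus $(\alpha_n)$ is non-decreasing and order-bounded above by $\beta$. Symmetrically, $(\beta_n)$ with $\beta_0 = \beta$ is non-increasing and order-bounded below by $\alpha$, and in fact $\alpha_n \preceq \beta_n$ for every $n$.

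Next I would promote order convergence to norm convergence using compactness. Because $T$ is completely continuous and $[\alpha,\beta]$ is bounded (here normality of $K$ is used: $0 \preceq x - \alpha \preceq \beta - \alpha$ forces $\|x-\alpha\| \le d\|\beta-\alpha\|$ for all $x \in [\alpha,\beta]$), the set $\{\alpha_n : n \ge 1\} = T(\{\alpha_n : n \ge 0\})$ is relatively compact, so $(\alpha_n)$ has a subsequence converging in norm to some $\bar{\alpha} \in [\alpha,\beta]$. A monotone sequence with a convergent subsequence converges (to the same limit): for any other subsequential limit, monotonicity plus normality of the cone pin down a single limit $\bar\alpha$. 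Passing to the limit in $\alpha_{n+1} = T\alpha_n$ and using continuity of $T$ yields $T\bar\alpha = \bar\alpha$. The same reasoning gives $\beta_n \to \bar\beta$ in norm with $T\bar\beta = \bar\beta$.

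Finally I would establish the extremality claims. If $x \in [\alpha,\beta]$ is any fixed point of $T$, then $\alpha \preceq x$ gives $\alpha_1 = T\alpha \preceq Tx = x$, and inductively $\alpha_n \preceq x$ for all $n$; taking the norm limit and using that $K$ is closed, we get $\bar\alpha \preceq x$. Hence $\bar\alpha$ is the smallest fixed point in $[\alpha,\beta]$; symmetrically $\bar\beta \preceq x$ is false in general---rather $x \preceq \bar\beta$ follows from $x \preceq \beta$ and induction, so $\bar\beta$ is the greatest fixed point. In particular $\bar\alpha \preceq \bar\beta$ and every fixed point lies between them.

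The main obstacle is the passage from order convergence of the monotone sequences to genuine norm convergence together with the identification of their limits as fixed points: this is exactly where both the normality of the cone (to control norms on the order interval) and the complete continuity of $T$ (to extract a convergent subsequence and to pass the fixed point equation through the limit) are indispensable, and it is the step that must be carried out with care. The monotonicity bookkeeping in the first and third paragraphs is routine by comparison.
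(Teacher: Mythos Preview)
The paper does not supply a proof of Theorem~\ref{mim}; it is quoted as a well-known result with references to Zeidler and Amann. So there is no in-paper argument to compare against. Your proposed proof is the standard one and is correct: monotonicity and the sub/super-solution inequalities trap both iterative sequences in $[\alpha,\beta]$; normality makes $[\alpha,\beta]$ norm-bounded; complete continuity of $T$ yields a convergent subsequence; and in a normal cone a monotone sequence with a convergent subsequence is itself convergent (the sandwich $0\preceq \alpha_m-\alpha_{n_k}\preceq \alpha_{n_{k+1}}-\alpha_{n_k}$ plus the normality constant makes the full sequence Cauchy). Continuity then gives the fixed point, and the extremality argument by induction is exactly right.

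One point worth flagging: as stated in the paper, the roles of ``greatest'' and ``smallest'' are interchanged. Your argument correctly shows that the sequence starting at $\alpha$ converges to the \emph{smallest} fixed point in $[\alpha,\beta]$ and the sequence starting at $\beta$ converges to the \emph{greatest} one; this is the content of the cited references and is what the paper actually uses later (only the existence of a fixed point in $[\alpha,\beta]$ is invoked in the proof of Theorem~\ref{thindex}). So your proof is fine, and the discrepancy is a typographical slip in the statement rather than an error in your reasoning.
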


In the next Proposition we recall the main properties of the fixed point index of a completely continuous operator relative to a cone, for more details see~\cite{Amann-rev, guolak}.  
In the sequel the adherence and the boundary of subsets of $K$ are understood to be relative to $K$.

\begin{pro}\label{propindex} Let $D$ be an open bounded set of $X$ with $0\in D_{K}$ and
$\overline{D}_{K}\ne K$, where $D_{K}=D\cap K$. 
Assume that $T:\overline{D}_{K}\to K$ is a completely continuous operator such that
$x\neq Tx$ for $x\in \partial D_{K}$. Then the fixed point index
 $i_{K}(T, D_{K})$ has the following properties:
 
\begin{itemize}

\item[$(i)$] If there exists $e\in K\setminus \{0\}$
such that $x\neq Tx+\lambda e$ for all $x\in \partial D_K$ and all
$\lambda>0$, then $i_{K}(T, D_{K})=0$.

\item[] For example $(i)$ holds if $Tx\not\preceq x$ for $x\in
\partial D_K$.

\item[$(ii)$] If $\|Tx\|\ge \|x\|$ for $x\in
\partial D_K$, then $i_{K}(T, D_{K})=0$.

\item[$(iii)$] If $Tx \neq \lambda x$ for all $x\in
\partial D_K$ and all $\lambda > 1$, then $i_{K}(T, D_{K})=1$.

\item[] For example $(iii)$ holds if either $Tx\not\succeq x$ for $x\in
\partial D_K$ or $\|Tx\|\le \|x\|$ for $x\in
\partial D_K$.

\item[(iv)] Let $D^{1}$ be open in $X$ such that
$\overline{D^{1}}\subset D_K$. If $i_{K}(T, D_{K})=1$ and $i_{K}(T,
D_{K}^{1})=0$, then $T$ has a fixed point in $D_{K}\setminus
\overline{D_{K}^{1}}$. The same holds if 
$i_{K}(T, D_{K})=0$ and $i_{K}(T, D_{K}^{1})=1$.
\end{itemize}
\end{pro}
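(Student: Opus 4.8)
The plan is to take as given the construction of the fixed point index $i_K$ and its classical axioms --- normalization ($i_K(T,D_K)=1$ when $T$ is constant with value in $D_K$), additivity/excision, homotopy invariance and the solution property ($i_K(T,D_K)\neq 0$ forces a fixed point of $T$ in $D_K$) --- as developed in~\cite{Amann-rev, guolak} from the Leray--Schauder degree together with a Dugundji-type extension of $T$ and a retraction of $X$ onto $K$. Every one of $(i)$--$(iv)$ is then obtained from these axioms by exhibiting an admissible homotopy; the recurring things to verify are only that the homotopy carries $\overline{D}_K$ into $K$ (automatic, $K$ being a cone) and that it is fixed-point free on $\partial D_K$.

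For $(iii)$ I would use $H(t,x)=tTx$, $t\in[0,1]$: it is admissible because at $t=0$ its only fixed point is $\theta\notin\partial D_K$, while for $t\in(0,1]$ the relation $tTx=x$ would force $Tx=t^{-1}x$ with $t^{-1}\geq 1$, which is excluded by the hypothesis when $t^{-1}>1$ and by the standing assumption $x\neq Tx$ when $t=1$. Homotopy invariance and normalization then give $i_K(T,D_K)=i_K(\theta,D_K)=1$. The two ``example'' cases reduce to this, since for $x\in\partial D_K$ (so $x\neq\theta$, as $\theta\in D_K$) the equation $Tx=\lambda x$, $\lambda>1$, is incompatible both with $Tx\not\succeq x$ (it gives $Tx-x=(\lambda-1)x\in K$) and with $\|Tx\|\leq\|x\|$ (it gives $\|Tx\|=\lambda\|x\|>\|x\|$).

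For $(i)$ I would take $H(t,x)=Tx+te$ with a fixed $e\in K\setminus\{\theta\}$ and $t\geq 0$: admissibility along the whole ray is precisely the hypothesis for $t>0$ and the standing assumption for $t=0$, and from $x=Tx+te$ with $x\in\overline{D}_K$ one gets $t\|e\|=\|x-Tx\|\leq\operatorname{diam}(\overline{D}_K)+\sup_{x\in\overline{D}_K}\|Tx\|<\infty$ (finite since $D_K$ is bounded and $T$ completely continuous), so for $t_0$ large $H(t_0,\cdot)$ has no fixed point in $\overline{D}_K$ at all; hence $i_K(H(t_0,\cdot),D_K)=0$ and homotopy invariance gives $i_K(T,D_K)=0$. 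Its ``example'' is immediate: if $Tx\not\preceq x$ on $\partial D_K$ then $x=Tx+\lambda e$ with $\lambda>0$ would give $x-Tx=\lambda e\in K$, a contradiction, so the hypothesis of $(i)$ holds with \emph{any} $e\in K\setminus\{\theta\}$. Part $(iv)$ is pure additivity: since $\overline{D^1}\subset D_K$ and both indices are (by hypothesis) defined, so that $T$ is fixed-point free on $\partial D_K$ and on $\partial D^1_K$, one has $i_K(T,D_K)=i_K(T,D^1_K)+i_K(T,D_K\setminus\overline{D^1_K})$; when the two prescribed values are $1$ and $0$ in either order the last index equals $\pm 1\neq 0$, and the solution property produces a fixed point in $D_K\setminus\overline{D^1_K}$.

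The step I expect to be the real obstacle is $(ii)$, which --- unlike the ``example'' in $(i)$ --- is not a direct consequence of the index-zero criterion $(i)$, since $\|Tx\|\geq\|x\|$ gives no control on the direction of $x-Tx$. The multiplicative homotopy $H(t,x)=(1+t)Tx$ is admissible (if $(1+t)Tx=x$ on $\partial D_K$ then $\|Tx\|=(1+t)^{-1}\|x\|\leq\|x\|$, which with $\|Tx\|\geq\|x\|$ forces $t=0$ and $Tx=x$, excluded), so $i_K(T,D_K)=i_K(\lambda T,D_K)$ for every $\lambda\geq 1$, and what remains is to prove that this common value is $0$. Here the cone/retraction structure is genuinely used: a crude ``push to infinity'' can fail because fixed points may accumulate near $\theta$, and the clean argument is carried out at the level of the degree-theoretic construction of $i_K$, which I would invoke from~\cite{guolak} (or reproduce). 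If instead one insisted on a fully self-contained treatment, the genuine difficulty would be the construction of $i_K$ and the verification of its axioms --- precisely the part we are importing from the literature.
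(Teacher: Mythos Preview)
The paper does not prove this proposition: it is explicitly stated as a \emph{recall} of standard properties of the fixed point index, with proofs deferred to the references~\cite{Amann-rev, guolak}. Your sketch therefore supplies more than the paper does, and your arguments for $(i)$, $(iii)$, $(iv)$ and the two ``example'' sub-claims are correct and standard. Your assessment of $(ii)$ is also accurate: it is not a corollary of $(i)$ via an order argument, and the clean proofs in the literature do use the degree/retraction construction of $i_K$ rather than only its axioms; deferring that step to~\cite{guolak} is precisely what the paper does for the entire proposition.
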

We state our first result on the existence of non-trivial fixed points.
\begin{thm}\label{thindex} Let $X$ be a real Banach space, $K$ a normal cone
with normal constant $d\ge 1$ and nonempty interior (i.e. solid) and $T\colon
K\to K$ a completely continuous operator. \newline
Assume that
\begin{itemize}
\item[(1)]  there exist $\beta \in K$, with $T\beta\preceq \beta$, and ${R}>0$ such that $B[\beta,{R}]\subset
    K$,
\item[(2)] the map $T$ is non-decreasing in the set
\[ \mathcal{P}=\Bigl\{x\in K : x\preceq \beta \quad \mbox{and} \quad 
\frac{{R}}{d}\le \|x\|
\Bigr\},
\]{}
\item[(3)] there exists a (relative) open bounded set $V\subset K$ such that $i_K(T,V)=0$ and either
$\overline{K_{R}}\subset V$ or $\overline{V}\subset K_{R}$.
\end{itemize}
Then the map $T$ has at least one non-zero fixed point $x_1$ in $K$,
 that
$$ \mbox{either belongs  to $\mathcal{P}$ or belongs to} 
\left\{ \begin{array}{ll} V\setminus \overline{K_{R}},  & \mbox{in case $\overline{K_{R}}\subset V$,} \\ K_{R}\setminus \overline{V},  & \mbox{in case $\overline{V}\subset K_{R}$.}\end{array}\right.
$$
\end{thm}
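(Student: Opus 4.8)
The plan is to exploit hypothesis (1) geometrically, so that the monotone region $\mathcal{P}$ becomes large enough for the monotone iterative method of Theorem~\ref{mim} to run there, and to fall back on the fixed point index of Proposition~\ref{propindex} only when that monotone mechanism fails to produce a fixed point.

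First I would record two consequences of $B[\beta,{R}]\subset K$. If $z\in X$ and $\|z\|\le {R}$, then $\beta-z\in B[\beta,{R}]\subset K$, i.e.\ $z\preceq\beta$; taking $z$ in the direction of $\beta$ (and using $K\cap(-K)=\{\theta\}$, together with ${R}>0$) this forces $\|\beta\|\ge {R}$, so that $\beta\in\mathcal{P}$ since $d\ge 1$. Consequently $\partial K_{R}\subset\{x\in K:\ x\preceq\beta\}$, and for any $x_0\in\partial K_{R}$ we have $x_0\preceq\beta$; moreover, for $z\in[x_0,\beta]$ normality applied to $\theta\preceq x_0\preceq z$ gives ${R}=\|x_0\|\le d\,\|z\|$, hence $[x_0,\beta]\subset\mathcal{P}$, and in particular $T$ is non-decreasing on $[x_0,\beta]$.

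Now I would argue by cases. If $Tx=x$ for some $x\in\partial K_{R}$, then $x\in\mathcal{P}$ and $x\ne\theta$, and we are done. Otherwise $x\ne Tx$ on $\partial K_{R}$, so $i_K(T,K_{R})$ is well defined (note $\theta\in K_{R}$ and $\overline{K_{R}}\ne K$, since $K$ is a nontrivial cone, hence unbounded). If there exists $x_0\in\partial K_{R}$ with $x_0\preceq Tx_0$, then on the ordered interval $[x_0,\beta]$ the operator $T$ is completely continuous and non-decreasing, with $x_0\preceq Tx_0$ and $T\beta\preceq\beta$ by (1); Theorem~\ref{mim} then yields a fixed point $x_1\in[x_0,\beta]\subset\mathcal{P}$, with $x_1\ne\theta$ because $\|x_1\|\ge {R}/d>0$. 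If instead $Tx\not\succeq x$ for every $x\in\partial K_{R}$, then Proposition~\ref{propindex}$(iii)$ gives $i_K(T,K_{R})=1$; combining this with $i_K(T,V)=0$ and the nesting in (3), Proposition~\ref{propindex}$(iv)$ applied to the pair $(V,K_{R})$ when $\overline{K_{R}}\subset V$, or to the pair $(K_{R},V)$ when $\overline{V}\subset K_{R}$, produces a fixed point in $V\setminus\overline{K_{R}}$, respectively in $K_{R}\setminus\overline{V}$, which is non-zero since $\theta$ belongs to $\overline{K_{R}}$ and to $\overline{V}$.

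The step I expect to be the crux is the geometric reduction of the second paragraph: one must observe that $B[\beta,{R}]\subset K$ forces $[x_0,\beta]\subset\mathcal{P}$ for every $x_0$ with $\|x_0\|={R}$, which is exactly what makes hypothesis (2) strong enough to apply Theorem~\ref{mim} on $[x_0,\beta]$. The verification that the relevant fixed point indices are well defined, and the bookkeeping needed to place the fixed point into the prescribed subset in each of the two configurations of (3), are then routine applications of Proposition~\ref{propindex}.
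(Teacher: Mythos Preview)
Your proof is correct and follows essentially the same approach as the paper's: either some $x_0\in\partial K_R$ satisfies $x_0\preceq Tx_0$, in which case $[x_0,\beta]\subset\mathcal P$ and Theorem~\ref{mim} applies, or else $Tx\not\succeq x$ on $\partial K_R$, whence $i_K(T,K_R)=1$ and Proposition~\ref{propindex}$(iv)$ gives a fixed point in the appropriate annular region. You supply more detail than the paper in verifying $[x_0,\beta]\subset\mathcal P$, in checking that the index is well defined, and in confirming the fixed point is non-zero, but the structure and the key ideas are the same.
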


\begin{proof}  Since $B[\beta,{R}]\subset K$ we have that if $x\in
K$ with $\|x\|={R}$ then  $x\preceq \beta$.

Suppose first that we can choose $\alpha\in K$ with $\|\alpha
\|={R}$ and $T\alpha \succeq \alpha$. Since $\alpha \preceq \beta$ and
due to the normality of the cone $K$ we have that $[\alpha,
\beta]\subset \mathcal{P}$ which implies that $T$ is non-decreasing on
$[\alpha, \beta]$. Then we can apply the Theorem~\ref{mim} to ensure the existence of
a fixed point of $T$ on $[\alpha, \beta]$, which, in
particular, is a non-trivial fixed point.

Now suppose that such $\alpha$ does not exist. Thus 
$Tx \nsucceq x$ for all $x\in K$ with $\|x\|={R}$, which by Proposition~\ref{propindex}, $(iii)$ implies that $i_K(T,K_{R})=1$. Since, by assumption, $i_K(T,V)=0$ we get the existence of  a 
non-trivial fixed point $x_1$ belonging to the set $V\setminus \overline{K_{R}}$ (when $\overline{K_{R}}\subset V$) or to the 
$K_{R}\setminus \overline{V}$  (when $\overline{V}\subset K_{R}$).
\end{proof}

\begin{rem} \label{compthem}
We note that we can use either Proposition~\ref{propindex}, (i) or Proposition~\ref{propindex}, (ii)  in order to check the assumption (3) in Theorem~\ref{thindex}.
 We also stress that $\mathcal{P}$ is contained in the set $\{x\in K : \frac{R}{d}\le \|x\| \le d\,\|\beta \| \}$. Therefore Theorem~\ref{thindex} is a genuine generalization of the previous fixed point theorems obtained in~\cite{cabcid,CCI,jc-df-fm,df-gi-jp-prse}. 
Moreover, we show in the applications that in many cases is useful to apply Theorem~\ref{thindex} with a set $V$ different from $K_{r}$. 
\end{rem}

We observe that, following some ideas introduced in~\cite[Theorem 2.1]{CCI}, it is possible to modify the assumptions of Theorem~\ref{thindex} in order to deal with non-increasing operators. The next result describes precisely this situation.
\begin{thm}\label{thindexdual} Let $X$ be a real Banach space, $K$ a cone
with nonempty interior (i.e. solid) and $T\colon
K\to K$ a completely continuous operator. \newline
Assume that
\begin{itemize}
\item[(1')]  there exist $\alpha \in K$, with $T\alpha\preceq \alpha$, and $0<R<\|\alpha \|$ such that $B[\alpha,{R}]\subset
    K$,
\item[(2')] the map $T$ is non-increasing in the set
\[ \widetilde{\mathcal{P}}=\Bigl\{x\in K :
R\le \|x\|\le  \|\alpha\|
\Bigr\},
\]{}
\item[(3')] there exists a (relative) open bounded set $V\subset K$ such that $i_K(T,V)=1$ and either $\overline{K_{R}}\subset V$ or
 $\overline{V}\subset K_{R}$.
\end{itemize}
Then the map $T$ has at least one non-zero fixed point such that 
$$ \mbox{either belongs  to $\widetilde{\mathcal{P}}$ or belongs to} \left\{ \begin{array}{ll} V\setminus \overline{K_{R}},  & \mbox{in case $\overline{K_{R}}\subset V$,} \\ K_{R}\setminus \overline{V},  & \mbox{in case $\overline{V}\subset K_{R}$.}\end{array}\right.
$$
\end{thm}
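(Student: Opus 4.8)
The plan is to mirror the proof of Theorem~\ref{thindex}, but exchanging the roles of the index-$0$ and index-$1$ alternatives, and swapping ``non-decreasing'' for ``non-increasing''. The key observation, exactly as before, is that $B[\alpha,R]\subset K$ forces $x\preceq\alpha$ for every $x\in K$ with $\|x\|=R$; combined with $R<\|\alpha\|$ this gives a genuine shell $\widetilde{\mathcal{P}}$ on which $T$ is assumed non-increasing.

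First I would treat the ``good'' case: suppose there exists $\beta\in K$ with $\|\beta\|=R$ and $T\beta\succeq\beta$. Since $\|\beta\|=R$ we have $\beta\preceq\alpha$, so $[\beta,\alpha]\subset\widetilde{\mathcal{P}}$ and hence $T$ is non-increasing on $[\beta,\alpha]$. Here I cannot invoke Theorem~\ref{mim} directly because that result is for non-decreasing maps; instead I would apply it to the second iterate $T^2$, which is non-decreasing on $[\beta,\alpha]$ and, using the monotone-reversal inequalities $T\beta\succeq\beta$, $T\alpha\preceq\alpha$ together with non-increasingness, satisfies $T^2\beta\succeq\beta$ and $T^2\alpha\preceq\alpha$ as well as $T^2$ mapping $[\beta,\alpha]$ into itself (this is the standard trick from~\cite[Theorem 2.1]{CCI}). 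Theorem~\ref{mim} then yields a fixed point of $T^2$ in $[\beta,\alpha]$; to conclude I need a short argument that one can extract a genuine fixed point of $T$, e.g. by noting that the monotone iterates of $T$ from the endpoints interlace and their common limit (or the extremal fixed points of $T^2$) is fixed by $T$. In any case this fixed point is non-trivial since $\|\beta\|=R>0$ forces it to have norm at least $R/\text{(nothing)}$—more simply, it lies in $[\beta,\alpha]$ with $\beta\neq\theta$, but to be safe one checks it is non-zero directly from $\beta\preceq x$.

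Now the complementary case: assume no such $\beta$ exists, so $Tx\nsucceq x$ for all $x\in K$ with $\|x\|=R$. By Proposition~\ref{propindex}$(iii)$ (the ``$Tx\not\succeq x$'' criterion) this gives $i_K(T,K_R)=1$; wait—this is the same conclusion as the hypothesis $i_K(T,V)=1$, so instead I should argue the \emph{other} way. Reconsidering: if for every $x\in\partial K_R$ we had $Tx\succeq x$ fail, we also want information pushing the index to $0$; the correct dichotomy is that either there is $\beta$ with $\|\beta\|=R$ and $T\beta\preceq\beta$ (the non-increasing analogue of an upper solution on the inner sphere) or $Tx\npreceq x$ for all such $x$, which by Proposition~\ref{propindex}$(i)$ gives $i_K(T,K_R)=0$; combined with the hypothesis $i_K(T,V)=1$ and property $(iv)$, $T$ has a fixed point in $V\setminus\overline{K_R}$ or in $K_R\setminus\overline{V}$ according to the two nesting cases, and this fixed point is non-zero because $0\in K_R$ is not in that annular region. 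In the remaining case, where such a $\beta$ with $T\beta\preceq\beta$ exists, I pair it with $\alpha$ (noting $\beta\preceq\alpha$ since $\|\beta\|=R$, and both $T\beta\preceq\beta$, $T\alpha\preceq\alpha$) and run the $T^2$ monotone iteration on $[\,\text{something}\,]$; here the cleaner route is to start the non-increasing iteration from $\beta$, which produces an increasing subsequence and a decreasing subsequence sandwiching a fixed point in $\widetilde{\mathcal{P}}$.

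The main obstacle is organizing the correct trichotomy so that it matches the stated hypotheses $(1')$--$(3')$: one must identify which sign condition on $T$ versus the identity on the sphere $\|x\|=R$ yields index $0$ and which yields index $1$, and then handle the leftover monotone case by passing to $T^2$ and carefully verifying the sandwich inequalities (the reversal of inequalities under a non-increasing map is what makes $T^2\alpha\preceq\alpha$ and $T^2\beta\succeq\beta$ hold), finally extracting a fixed point of $T$ itself and checking non-triviality. None of the individual steps is deep—each is either a one-line consequence of $B[\alpha,R]\subset K$, an application of Proposition~\ref{propindex}, or the classical $T^2$ trick—but assembling them in the right order, and in particular pinning down why $(3')$ asks for $i_K(T,V)=1$ (rather than $0$ as in Theorem~\ref{thindex}), is where the care is required.
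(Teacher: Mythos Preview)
Your difficulties stem largely from a typo in hypothesis~(1'): the paper's own proof uses the chain $Tx\succeq T\alpha\succeq\alpha\succeq x$, so the intended assumption is $\alpha\preceq T\alpha$ (a \emph{lower} solution), not $T\alpha\preceq\alpha$ as printed. With the corrected hypothesis the paper's argument is far shorter than anything you propose, and in particular does not use the $T^{2}$ trick at all.

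The paper observes that for \emph{every} $x\in\partial K_{R}$ one has $x\preceq\alpha$ (from $B[\alpha,R]\subset K$), and both $x,\alpha$ lie in $\widetilde{\mathcal P}$; since $T$ is non-increasing there and $\alpha\preceq T\alpha$, one gets
\[
Tx\;\succeq\;T\alpha\;\succeq\;\alpha\;\succeq\;x.
\]
Hence $Tx\succeq x$ holds on the whole sphere $\partial K_{R}$. The dichotomy is then immediate: if some $x\in\partial K_{R}$ also satisfies $Tx\preceq x$, then $Tx=x$ is a fixed point in $\widetilde{\mathcal P}$; otherwise $Tx\npreceq x$ for all $x\in\partial K_{R}$, whence $i_{K}(T,K_{R})=0$ by Proposition~\ref{propindex}(i), and together with $i_{K}(T,V)=1$ from~(3') one obtains the fixed point in $V\setminus\overline{K_{R}}$ or $K_{R}\setminus\overline{V}$.

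Your reconsidered dichotomy gets the index-$0$ branch right, but in the complementary branch you reach for a monotone iteration via $T^{2}$ that is both unnecessary and, as you set it up, not well-founded: with the printed hypothesis $T\alpha\preceq\alpha$ and a $\beta$ with $T\beta\preceq\beta$, you have two ``upper solutions'' and no way to produce the sandwich $T^{2}\beta\succeq\beta$, $T^{2}\alpha\preceq\alpha$ that Theorem~\ref{mim} would require. With the corrected hypothesis $\alpha\preceq T\alpha$, the universal inequality $T\beta\succeq\beta$ already forces $T\beta=\beta$ in that branch, so no iteration is needed. The reference to \cite[Theorem~2.1]{CCI} before the statement signals only the general philosophy for non-increasing operators, not that the $T^{2}$ device is actually invoked.
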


\begin{proof} Let $x\in K $ be such that $\|x\|=R$. Then by { (1')} we have that $x\preceq \alpha$ and since $x,\alpha\in \widetilde{\mathcal{P}}$ it follows from { (2')}  that
$$Tx\succeq T\alpha \succeq \alpha \succeq x.$$ 
Now, if for some $x\in \partial K_{R}$ is the case that $Tx\preceq x$ then we are done. If not, $Tx\npreceq x$ for all $x\in \partial K_{R}$ which by Proposition~\ref{propindex} implies that $i_K(T,K_{R})=0$. This result together with  
{(3')} give the existence of a non-zero fixed point with the desired localization property. \end{proof}

\section{An application to a system of Hammerstein integral equations}\label{sectionapp}

We now apply the results of the previous Section in order to prove the existence of positive solutions of the system of integral equations
\begin{gather}
\begin{aligned}\label{ham-sys}
u_1(t)=\lambda_1 \int_{a}^{b}k_1(t,s)g_1(s)f_1(u_1(s),u_2(s))\,ds:=T_1(u_1,u_2)(t), \\
u_2(t)= \lambda_2 \int_{a}^{b}k_2(t,s)g_2(s)f_2(u_1(s),u_2(s))\,ds:=T_2(u_1,u_2)(t),
\end{aligned}
\end{gather}
where we assume the following assumptions:

\begin{itemize}
\item[($H_1$)] $\lambda_i>0$, for $i=1,2$.

\item[($H_2$)] $k_i: [a,b]\times [a,b] \to [0,+\infty)$ is continuous, for $i=1,2$.   

\item[($H_3$)]  $g_i: [a,b] \to [0,+\infty)$ is continuous, $g_i(s)>0$ for all $s\in [a,b]$, for $i=1,2$.

\item[($H_4$)]   $f_i: [0,+\infty)\times [0,+\infty) \to [0,+\infty)$ is continuous, for $i=1,2$.

\item[($H_5$)] There exist continuous functions $\Phi_i: [a,b]\to [0,+\infty)$ and constants $0<c_i<1$, $a\le a_i<b_i \le b$ such that for every $i=1,2$,
$$k_i(t,s)\le \Phi_i(s) \, \mbox{for $t,s\in [a,b]$ and} \, c_i \cdot \Phi_i(s)\le k_i(t,s) \, \mbox{for $t\in [a_i,b_i]$ and $s\in [a,b]$},$$
 and
$$\gamma_{i,*}:=\min_{t\in [a_i,b_i]} \int_{a_i}^{b_i}g_i(s)k_i(t,s)\,ds>0.$$
\end{itemize}

We work in  the space $C[a,b] \times C[a,b]$ endowed with the norm
$$\|(u_1,u_2)\|:=\max\{\|u_1\|_\infty, \|u_2\|_\infty \},$$ where $\|w\|_\infty:=\displaystyle\max_{t\in[a,b]}|w(t)|.$

Set $c=\min\{{c_1},{c_2}\}$ and let us define
$$
\tilde{K_i}:= \{ w \in C[a,b]: w(t)\geq 0 \, \, \mbox{for all $t\in [a,b]$ and} \, \min_{t\in [a_i,b_i]} w(t) \geq c
\|w\|_\infty  \},
$$
 and consider the
cone $K$ in $C[a,b] \times C[a,b]$ defined by
\begin{equation*}
\label{cone}
\begin{array}{c}
 K:=  \{ (u_1,u_2) \in \tilde{K_1} \times \tilde{K_2}  \},
\end{array}
\end{equation*}
which is a normal cone with $d=1$.

Under our assumptions it is routine to check that the integral
operator
\begin{equation*}
T (u_1,u_2)(t)
:=
\left(
 T_1(u_1,u_2)(t),
T_2(u_1,u_2)(t)
\right),
\end{equation*}
leaves $K$ invariant and is completely continuous.

Now we present our main result concerning the existence of positive solutions for the system \eqref{ham-sys}.

\begin{thm}\label{thHam} Assume that the assumptions $(H_1)-(H_5)$ hold and moreover:

\begin{enumerate}

 \item[$(H_6)$] There exist constants $B_1,B_2>0$ such that  for every $i=1,2$, $f_i(\cdot,\cdot)$ is non-decreasing on $[0, B_1]\times [0, B_{2}]$ (that is, if 
 $(u_1,u_2),(v_1,v_2)\in\R^2$ with $0\le u_i\le v_i\le B_i$ for $i=1,2,$ then $f_i(u_1,u_2)\le f_i(v_1,v_2)$ for $i=1,2$). 
 
 \item[$(H_7)$] For every $M>0$ there exists $\rho=\rho(M) >0$ such that, for every $i=1,2$,
$$\inf \Bigl\{ \frac{f_1(u,v)}{ \rho}:\; (u,v)\in [\rho,\rho/c]\times[0, \rho/c]\Bigr\}>M,$$
$$\inf \Bigl\{ \frac{f_2(u,v)}{ \rho}:\; (u,v)\in [0,\rho/c]\times[\rho, \rho/c]\Bigr\}>M.$$

\end{enumerate}

Then the system \eqref{ham-sys} has at least one positive solution in $K$ provided that
\begin{equation}\label{eqlamb} 0<\lambda_i < \sup_{ r_1\in (0,B_1),\\ r_2\in (0,B_2)}\frac{(1-c) r_i}{f_i(r_1,r_2)\gamma_{i}^{*}}, \end{equation}
where
$$\gamma_{i}^{*}:=\max_{t\in [a,b]} \int_{a}^{b}g_i(s)k_i(t,s)ds>0, \quad \mbox{for $i=1,2$.}$$

\end{thm}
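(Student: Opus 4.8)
The plan is to apply Theorem~\ref{thindex} to the operator $T=(T_1,T_2)$ on the cone $K$, so the bulk of the work is to build a suitable $\beta\in K$, verify the three hypotheses, and then translate the abstract conclusion into the statement about positive solutions of \eqref{ham-sys}. First I would fix $r_1\in(0,B_1)$ and $r_2\in(0,B_2)$ witnessing the supremum in \eqref{eqlamb}, i.e.\ with $\lambda_i<(1-c)r_i/(f_i(r_1,r_2)\gamma_i^*)$ for $i=1,2$, and set $\beta=(r_1,r_2)$ viewed as constant functions; since $c<1$ and the constant function $r_i$ lies in $\tilde K_i$, we have $\beta\in K$. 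I would take $R$ small, namely $R=c\min\{r_1,r_2\}$ (or a slightly smaller positive number), which makes $B[\beta,R]\subset K$: if $\|(w_1,w_2)-\beta\|\le R$ then each $w_i$ is nonnegative and satisfies $\min_{[a_i,b_i]}w_i\ge r_i-R\ge c r_i\ge c\|w_i\|_\infty$ because $\|w_i\|_\infty\le r_i+R$ and $r_i-R\ge c(r_i+R)$ for $R$ small enough; this is a routine but slightly delicate elementary computation that fixes the admissible $R$.

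Next, for hypothesis (1) I must check $T\beta\preceq\beta$. For $u=(r_1,r_2)$ constant, $T_i\beta(t)=\lambda_i\int_a^b k_i(t,s)g_i(s)f_i(r_1,r_2)\,ds\le \lambda_i f_i(r_1,r_2)\gamma_i^*<(1-c)r_i\le r_i$ by the choice of $\lambda_i$, so $T\beta\preceq\beta$ (here $(1-c)r_i\le r_i$ since $c>0$; one could even get the sharper bound $T_i\beta\le(1-c)r_i$, which I will reuse). For hypothesis (2), on the set $\mathcal P=\{x\in K: x\preceq\beta,\ R/d\le\|x\|\}$ with $d=1$, I must show $T$ is non-decreasing. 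If $(u_1,u_2)\preceq(v_1,v_2)$ in $K$ with both $\preceq\beta$, then pointwise $0\le u_i(s)\le v_i(s)\le r_i\le B_i$, so $(H_6)$ gives $f_i(u_1(s),u_2(s))\le f_i(v_1(s),v_2(s))$ for all $s$, and since the kernels $k_i$ and weights $g_i$ are nonnegative, $T_i(u_1,u_2)\le T_i(v_1,v_2)$ pointwise; hence $T(u_1,u_2)\preceq T(v_1,v_2)$. (In fact monotonicity here holds on all of $[\theta,\beta]$, not just on $\mathcal P$, so (2) is automatic.)

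The substantive part is hypothesis (3): I need a relative open bounded set $V\subset K$ with $i_K(T,V)=0$ and $\overline{V}\subset K_R$ — since $R$ is small this is the natural choice — or $\overline{K_R}\subset V$. I would take $V=K_\rho$ with $\rho=\rho(M)$ supplied by $(H_7)$ for a suitably large $M$, choosing $M$ so that $\lambda_i M\gamma_{i,*}>1$ (recall $\gamma_{i,*}>0$ from $(H_5)$), and also choosing $M$ large enough that the resulting $\rho<R$, which is legitimate because $\rho$ may be taken arbitrarily small: indeed $(H_7)$ is required to hold for \emph{every} $M>0$, and we may shrink $\rho$ further if needed since the cone condition is scale-invariant. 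Then on $\partial K_\rho$, for $(u_1,u_2)\in K$ with $\|(u_1,u_2)\|=\rho$, at least one coordinate, say $u_1$, has $\|u_1\|_\infty=\rho$, whence on $[a_1,b_1]$ we have $\rho\le u_1(t)$... wait, rather $u_1(t)\ge c\|u_1\|_\infty=c\rho$; here I would instead argue with the cone inequalities to place $(u_1(s),u_2(s))\in[\rho c,\rho/c]\times[0,\rho/c]$ for $s\in[a_1,b_1]$ — note $u_i(s)\le\|u_i\|_\infty\le\rho\le\rho/c$ and $u_1(s)\ge c\rho$ — and then a sharper form of $(H_7)$ (with the interval $[\rho c,\rho/c]$; the stated $(H_7)$ uses $[\rho,\rho/c]$, and if one coordinate attains norm $\rho$ the bound $u_1(s)\ge\rho$ need not hold, so I would use the honest estimate $f_1(u_1(s),u_2(s))\ge f_1(c\rho,0)$ via monotonicity for small $\rho<B_i$, or re-read $(H_7)$ so that the infimum is over $[\rho c,\rho/c]$) to get $f_1(u_1(s),u_2(s))\ge M\rho$ on $[a_1,b_1]$. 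Consequently $\|T_1(u_1,u_2)\|_\infty\ge T_1(u_1,u_2)(t)\ge\lambda_1\int_{a_1}^{b_1}k_1(t,s)g_1(s)M\rho\,ds\ge\lambda_1 M\rho\gamma_{1,*}>\rho=\|(u_1,u_2)\|$ for suitable $t$, so $\|T x\|\ge\|x\|$ on $\partial K_\rho$ and Proposition~\ref{propindex}(ii) yields $i_K(T,K_\rho)=0$. This verifies (3) with $V=K_\rho$, $\overline V\subset K_R$.

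Finally, Theorem~\ref{thindex} produces a non-zero fixed point $x_1$ of $T$ in $K$, lying either in $\mathcal P$ or in $K_R\setminus\overline{K_\rho}$; in both cases $x_1$ is a positive (nonnegative, nontrivial, and by the cone inequalities strictly positive on $[a_i,b_i]$) solution of \eqref{ham-sys}, which is the claimed conclusion. I expect the main obstacle to be the bookkeeping in hypothesis (3): reconciling the exact interval in $(H_7)$ with the cone estimates available on $\partial K_\rho$ (where only \emph{one} coordinate is guaranteed to have norm $\rho$), and ensuring $\rho$ can be taken below $R$ while keeping $\lambda_i M\gamma_{i,*}>1$; everything else is the routine verification that $T$ is monotone on $[\theta,\beta]$ and that the constant $\beta$ with the stated $R$ sits inside the cone.
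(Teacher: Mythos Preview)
Your outline has the right shape but contains three genuine gaps, two of which are fatal to the argument as written.

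\textbf{The cone order is not the pointwise order.} In $K$ the relation $x\preceq y$ means $y-x\in K$, i.e.\ $y_i-x_i\ge 0$ \emph{and} $\min_{[a_i,b_i]}(y_i-x_i)\ge c\|y_i-x_i\|_\infty$. When you check $T\beta\preceq\beta$ you only verify $T_i\beta(t)\le r_i$ pointwise; you must also show $\min_{[a_i,b_i]}(\beta_i-T_i\beta)\ge c\|\beta_i-T_i\beta\|_\infty$. This is exactly where the factor $(1-c)$ in \eqref{eqlamb} is used: from $T_i\beta(t)\le\lambda_i\gamma_i^*f_i(\beta_1,\beta_2)<(1-c)\beta_i$ one gets $\beta_i-T_i\beta(t)\ge c\beta_i\ge c\|\beta_i-T_i\beta\|_\infty$. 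The same issue arises in your monotonicity step: $T_iu\le T_iv$ pointwise does \emph{not} give $Tu\preceq Tv$; one needs the kernel estimate $k_i(t,s)\ge c\Phi_i(s)\ge c\,k_i(r,s)$ on $[a_i,b_i]$ to obtain the cone inequality for $T_iv-T_iu$.

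\textbf{The nesting $\overline V\subset K_R$ cannot be achieved.} You claim $\rho$ may be taken below $R$ because ``$(H_7)$ is required to hold for every $M>0$\dots\ and we may shrink $\rho$ further''. Neither assertion is justified: $(H_7)$ only asserts that \emph{some} $\rho(M)$ exists, and for the index computation you need $M>1/(\lambda_i\gamma_{i,*})$, which for superlinear $f_i$ forces $\rho$ to be \emph{large} (e.g.\ for $f_1=u_1^2$ one needs $\rho\ge M$). The correct nesting is therefore $\overline{K_R}\subset V$, with $R$ chosen below $\rho$.

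\textbf{The set $V=K_\rho$ does not match $(H_7)$.} You already noticed this: on $\partial K_\rho$ one coordinate has $\|u_1\|_\infty=\rho$, which on $[a_1,b_1]$ gives only $u_1(s)\ge c\rho$, not $u_1(s)\ge\rho$. The paper resolves this by taking instead
\[
V=\bigl\{(u_1,u_2)\in K:\ \min_{[a_1,b_1]}u_1<\rho\ \text{and}\ \min_{[a_2,b_2]}u_2<\rho\bigr\}.
\]
On $\partial V$ one has (say) $\min_{[a_1,b_1]}u_1=\rho$, hence $u_1(s)\in[\rho,\rho/c]$ on $[a_1,b_1]$ and $\|u_2\|_\infty\le\rho/c$, which is precisely the rectangle in $(H_7)$; the index is then shown to be zero via Proposition~\ref{propindex}(i) (the push along $e\equiv 1$), not (ii). The choice of this $V$ is the device alluded to in Remark~\ref{compthem}, and it is the missing idea in your hypothesis~(3).
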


\begin{proof} Due to (\ref{eqlamb}) we can fix $\beta_i \in
(0,B_i)$, $i=1,2$, such that
\begin{equation}\label{eqlamb2} \beta_i -\lambda_i \gamma_i^* f_i(\beta_1,\beta_2)>c \beta_i, \quad i=1,2.\end{equation}

On the other hand, for $M>\max\left\{\displaystyle\frac{1}{\lambda_1 \gamma_{1,*}},\displaystyle\frac{1}{\lambda_2 \gamma_{2,*}}\right\}$ let $\rho=\rho(M)>0$ as in $(H_7)$ and fix $R<\min\left\{\displaystyle\frac{1-c}{1+c}\cdot \beta_1, \displaystyle\frac{1-c}{1+c}\cdot \beta_2, \rho\right\}$.
 
Let us check that assumptions of Theorem~\ref{thindex} are satisfied with
$$\beta(t)=(\beta_1,\beta_2)\quad \mbox{for all $t\in [a,b]$},$$
and
$$ V=\{(u_1,u_2) \in K: \min_{t\in [a_1,b_1]}u_1(t)<\rho\ \text{and}\   \min_{t\in [a_2,b_2]}u_2(t)<\rho\}.$$

\noindent {\it Claim 1.- $B[\beta,R]\subset K$ and $T\beta \preceq \beta$.}

Since $\beta$ is constant and $R<\min\left\{\displaystyle\frac{1-c}{1+c}\cdot \beta_1, \displaystyle\frac{1-c}{1+c}\cdot \beta_2\right\}$ a direct computation shows that $B[\beta,R]\subset K$. Now, from (\ref{eqlamb2}) it follows for each $t\in
[a,b]$ and $i=1,2$
\[
[T_i \beta] (t) =  \lambda_i \int _a^b k_i(t,s)
 g_i(s) f_i(\beta_1,\beta_2) ds \le \lambda_i \gamma_i^* f_i(\beta_1,\beta_2)<\beta_i.
\]

Moreover, since $\|\beta_i-T_i\beta\|_{\infty}\le \beta_i$, $i=1,2$, and taking into account (\ref{eqlamb2}) we have for
$t\in [a_i,b_i]$ and $i=1,2,$
 \[\beta_i-[T_i\beta] (t)=\beta_i-\lambda_i \int _a^b k_i(t,s) g_i(s) f_i(\beta_1,\beta_2) ds
 \ge \beta_i-\lambda_i  \gamma_i^* f_i(\beta_1,\beta_2)>
 c_i \beta_i\ge  c_i  \|\beta_i-T_i\beta\|_{\infty}.\]

As a consequence, we have $T\beta \preceq \beta$,  and the claim is proven.
 
\medbreak

\noindent {\it Claim 2.- $T$ is non-decreasing on the set $\{x\in K :x\preceq \beta\}$}.

Let $u=(u_1,u_2), v=(v_1,v_2)\in K$ be such that $0 \le u_i(t)\le v_i(t)\le \beta_i$ for all $t\in [a,b]$ and $i=1,2$.
Since $f$ is non-decreasing in $[0,\beta_1]\times [0,\beta_2]$ we have for all $t\in [a,b]$ and $i=1,2$,
 \[[T_iv](t)-[T_iu](t)=
\lambda_i \int _a^b k_i(t,s)
  g_i(s) [f_i(v(s)) - f_i(u(s)] ds\ge 0.\]
  
Moreover, for all $t\in [a_i,b_i]$, $r\in [0,1]$ and $i=1,2$,
\begin{align*}
 [T_iv](t)-[T_iu](t) & =   \lambda_i \int_a^b k_i(t,s) g_i(s) [f_i(v(s)) - f_i(u(s))] ds\\
   & \ge  \lambda_i  \int_a^b c \Phi_i(s)  g_i(s) [f_i(v(s)) - f_i(u(s)]ds\\
  & \ge  c  \lambda_i \int_a^b k_i(r,s) g_i(s) [f_i(v(s)) - f_i(u(s))] ds\\
  & = c ([T_iv](r)-[T_iu](r)),
 \end{align*}
therefore $\displaystyle\min_{t\in  [a_i,b_i]} ([T_iv](t)-[T_iu](t)) \ge c
\|T_iv-T_iu\|_{\infty}$, $i=1,2$, so $Tu\preceq Tv$, and since $\mathcal{P}\subset \{x\in K :x\preceq \beta\}$, $T$ is also non-decreasing on $\mathcal{P}$.
 \medbreak

\noindent {\it Claim 3.- $\overline{K_{R}} \subset V$ and $i_{K}(T,V)=0$}. 

Firstly, note that since $R<\rho$ then we have $\overline{K_{R}}\subset K_{\rho} \subset V$.

Now let $e(t)\equiv 1$ for $t\in [a,b]$. Then $(e,e)\in K$ and we are going to prove that
\begin{equation*}
(u_1,u_2)\ne T(u_1,u_2)+\mu (e,e)\quad\text{for } (u_1,u_2)\in \partial
V \quad\text{and } \mu \geq 0.
\end{equation*}

If not, there exist $(u_1,u_2)\in \partial V$ and
$\mu \geq 0$ such that $(u_1,u_2)= T(u_1,u_2)+\mu (e,e)$.

Without loss of generality, we can assume that for all $t\in [a_1,b_1]$ we have
$$
\rho\leq u_1(t)\leq {\rho/c},\\\ \min_{t\in [a_1,b_1]} u_1(t)=\rho \\\ \text{and  }\\\ 0\leq u_2(t)\leq {\rho/c}.
$$

Then, for $t\in [a_1,b_1]$, we obtain
\begin{eqnarray*}
u_1(t)&=&\lambda_1 \int_{a}^{b} k_1(t,s)g_1(s)f_1(u_1(s),u_2(s))\,ds
+ \mu e(t)\\
&\geq& \lambda_1 \int_{a_1}^{b_1}k_1(t,s)g_1(s) f_1(u_1(s),u_2(s))\,ds+{\mu}\ge
\lambda_1 M \rho  \gamma_{1,*} +{\mu}>\rho+\mu.
\end{eqnarray*}

Thus, we obtain
$\rho=\displaystyle\min_{t\in [a_1,b_1]}u(t)>
\rho+\mu\geq \rho$,
a contradiction. 

Therefore by Proposition~\ref{propindex} we have that $i_{K}(T,V)=0$ and the proof is finished.
\end{proof}
\begin{rem}
 The following condition, similar to the one given in~\cite{chzh1},  implies $(H_7)$ and it is easier to check.
\begin{itemize}
\item[$(H_7)^*$] For every $i=1,2$, $\displaystyle\lim_{u_i\to +\infty} \frac{f_i(u_1,u_2)}{u_i}=+\infty, \, \, 
 \text{uniformly w.r.t.}\, u_j \in [0,\infty), j\neq i.$
\end{itemize}
\end{rem}
\begin{rem}
In order to deal with negative kernels $k_i(t,s)<0$ we can require conditions $(H_2)$, $(H_3)$ and $(H_5)$ on the absolute value of the kernel such that $|k_i(t,s)|>0$ and conditions $(H_4)$, $(H_6)$ and $(H_7)$ on  $\mathop{\mathrm{sgn}}(k_i)\cdot f_i$.
\end{rem} 
As an illustrative example, we apply our results to the system of ODEs
\begin{gather}
\begin{aligned}\label{1syst}
u_1''(t) + \lambda_1 f_1(u_1(t),u_2(t)) = 0, \ t\in  (0,1), \\
u_2''(t) + \lambda_2 f_2(u_1(t),u_2(t)) = 0, \ t\in  (0,1),%
\end{aligned}
\end{gather}
with the BCs
\begin{gather}
\begin{aligned}\label{1BC}
u_1'(0)=0,\ u_1(1)+u_1'(1)=&0,\\
u_2'(0)=0,\ u_2(1)=\xi u_2(\eta),\ \eta,\xi &\in (0,1). 
\end{aligned}
\end{gather}
To the system \eqref{1syst}-\eqref{1BC} we associate the system of  integral equations
\begin{gather}
\begin{aligned}\label{syst2}
u_1(t)=\lambda_1\int_{0}^{1}k_1(t,s) f_1(u_1(s),u_2(s))\,ds, \\
u_2(t)= \lambda_2\int_{0}^{1}k_2(t,s) f_2(u_1(s),u_2(s))\,ds,%
\end{aligned}
\end{gather}
where the Green's functions are given by
\begin{equation} \label{ker1}
k_1(t,s)=\begin{cases} 2-t, &s\le t, \\ 2-s,&s>t,
\end{cases}
\end{equation}
and 
\begin{equation}\label{ker2}
k_2(t,s)=\dfrac{1}{1-\xi}(1-s)-\begin{cases}
\dfrac{\xi}{1-\xi}(\eta -s), &  s \le \eta\\ \quad 0,&
s>\eta
\end{cases}
 - \begin{cases} t-s, &s\le t, \\ \quad 0,&s>t.
\end{cases}
\end{equation}
The Green's function $k_1$ was studied in~\cite{jw-tmna} were it was shown that we may take (with our notation)
$$\Phi_1(s)= (2-s),\ \gamma_{1}^{*}=\frac{3}{2}.$$
The choice of $[a_1,b_1]=[0,1]$ gives $$c_1=\frac{1}{2},\ \gamma_{1,*}=1.$$

The kernel $k_2$ was extensively studied in~\cite{jw-wilm, jw-tmna} and is more complicated to be dealt with, due to the presence of the nonlocal term in the BCs. In this case we may take
$$
\Phi_2(s)=k_2(0,s)=\begin{cases}\quad  \dfrac{1-s}{1-\xi},
& \text{ if } \eta <s \leq 1,\\
\dfrac{1-s-\xi(\eta-s)}{1-\xi},
& \text { if } 0 \leq s \leq \eta,
\end{cases}
\quad \gamma_{2}^{*}=\dfrac{1-\xi{\eta}^2}{2(1-\xi)}.
$$ 
The choice, as in~\cite{jw-wilm}, of $[a_2,b_2]=[0,b_2],$
where 
$$
b_2=\begin{cases}  \dfrac{1-\xi{\eta}}{2(1-\xi)},
& \text{ if } 1+\xi{\eta} \leq 2\eta,\\
 \dfrac{1}{(2-\xi)},
& \text { if } 1+\xi{\eta}>2\eta,
\end{cases}
$$
leads to
$$c_2=\dfrac{1-\xi{\eta}-(1-\xi)b_2}{1-\xi\eta}, \ \gamma_{2,*}=\begin{cases} \quad b_2^2,
& \text{ if } 1+\xi{\eta} \leq 2\eta,\\
\dfrac{1-2\xi{\eta}^2+\xi^2{\eta}^2}{2(1-\xi)(2-\xi)},
& \text { if } 1+\xi{\eta}>2\eta.
\end{cases}
$$
We now fix, as in~\cite{jw-tmna}, $\eta=1/2, \xi=1/4$. This gives $b_2=4/7$ and
$$ \gamma_{2}^{*}=\frac{5}{8}, c_2=\frac{25}{49}, \gamma_{2,*}=\frac{19}{56}.$$
Furthermore take 
\begin{equation}\label{exf}
f_1(u_1,u_2)=(2+\sin(u_2))u_1^2,\ f_2(u_1,u_2)=(2+\sin(u_1))u_2^2.
\end{equation}
In the case of the nonlinearities \eqref{exf}, we can choose $B_1=B_2=\pi/2$. We observe that condition $(H_7)^*$ holds, we note that $c=\min\{{c_1},{c_2}\}=1/2$ and that 
 $$\sup_{ r_1\in (0,\pi/2),\\ r_2\in (0,\pi/2)}\frac{ r_i}{2f_i(r_1,r_2)\gamma_{i}^{*}}=+\infty,\ \text{for every}\ i.$$ 
As a consequence, by means of Theorem~\ref{thHam}, we obtain a nonzero solution of the system \eqref{1syst}-\eqref{1BC} for every $\lambda_1,\lambda_2\in (0,\infty)$.

\section*{Acknowledgements}
A. Cabada and J. A. Cid were partially supported by Ministerio de Educaci\'on y Ciencia, Spain, and FEDER, Project MTM2010-15314, G. Infante was partially supported by G.N.A.M.P.A. - INdAM (Italy). This paper was partially written during a visit of G. Infante to the 
Departamento de An\'alise Matem\'atica of the Universidade de Santiago de Compostela. G. Infante is grateful to the people of the 
aforementioned Departamento for their kind and warm hospitality.

\end{document}